\def\setupbib{\catcode`@=\active}
\def\gatherkey#1#2{\gatherkeyaux{#1}#2\gatherkeyaux}
\def\gatherkeyaux#1#2,#3\gatherkeyaux{\bib{#2}{#1}{#3}}
\def\bbordermatrix#1{\begingroup \m@th
  \@tempdima 4.75\p@
  \setbox\z@\vbox{%
    \def\cr{\crcr\noalign{\kern2\p@\global\let\cr\endline}}%
    \ialign{$##$\hfil\kern2\p@\kern\@tempdima&\thinspace\hfil$##$\hfil
      &&\quad\hfil$##$\hfil\crcr
      \omit\strut\hfil\crcr\noalign{\kern-\baselineskip}%
      #1\crcr\omit\strut\cr}}%
  \setbox\tw@\vbox{\unvcopy\z@\global\setbox\@ne\lastbox}%
  \setbox\tw@\hbox{\unhbox\@ne\unskip\global\setbox\@ne\lastbox}%
  \setbox\tw@\hbox{$\kern\wd\@ne\kern-\@tempdima\left[\kern-\wd\@ne
    \global\setbox\@ne\vbox{\box\@ne\kern2\p@}%
    \vcenter{\kern-\ht\@ne\unvbox\z@\kern-\baselineskip}\,\right]$}%
  \null\;\vbox{\kern\ht\@ne\box\tw@}\endgroup}
\newtheorem*{corollary*}{Corollary}
\newtheorem*{theorem*}{Theorem}
\theoremstyle{definition}
\newtheorem{theorem}{Theorem}
\newtheorem{lemma}[theorem]{Lemma}
\newtheorem{corollary}[theorem]{Corollary}
\newtheorem{definition}[theorem]{Definition}
\theoremstyle{remark}
\theoremstyle{remark}
\newtheorem{example}{Example}
\theoremstyle{remark}
\newtheorem*{example*}{Example}
\theoremstyle{remark}
\newtheorem*{remark}{Remark}
\theoremstyle{remark}
\def\0{{\bm 0}}   
\title{On a class of optimal constant weight ternary codes }
\author{
 Hadi Kharaghani\thanks{Department of Mathematics and Computer Science, University of Lethbridge,
Lethbridge, Alberta, T1K 3M4, Canada. \texttt{kharaghani@uleth.ca}},
\and
  Sho Suda\thanks{Department of Mathematics,  National Defense Academy of Japan, Yokosuka, Kanagawa 239-8686, Japan. \texttt{ssuda@nda.ac.jp}},
\and
Vlad Zaitsev\thanks{Department of Mathematics and Computer Science, University of Lethbridge,
Lethbridge, Alberta, T1K 3M4, Canada. \texttt{vlad.zaitsev@uleth.ca}}
}
\date{\today}
\begin{document}
\maketitle

\begin{abstract}
   A weighing matrix $W$ of order $n=\frac{p^{m+1}-1}{p-1}$ and weight $p^m$ is constructed and shown that
    the rows of $W$ and  $-W$ together form optimal constant weight ternary codes of length $n$, weight $p^m$ and minimum distance $p^{m-1}(\frac{p+3}{2})$ for each odd prime power $p$ and integer $m\ge 1$ and thus
    $$A_3\left(\frac{p^{m+1}-1}{p-1},p^{m-1}\big(\frac{p+3}{2}\big),p^{m}\right)=2\big(\frac{p^{m+1}-1}{p-1}\big).$$
\end{abstract}

\section{Introduction}
It is not hard to see that the rows of the  incidence matrix of any symmetric design form optimal binary codes. 
 A \emph{weighing matrix} of order $n$ and weight $p$, shown as $W(n,p)$, is a $(0,\pm 1)$-matrix $W$ of order $n$ such that $WW^t = pI_n$. The case where $n=p+1$ is called a \emph{conference} matrix and $n=p$ is a \emph{Hadamard} matrix. Optimal binary codes obtained from Hadamard matrices constitute an important class of codes due to their error-correcting capability. One expects that weighing matrices also provide sets of useful codes.   The rows of a weighing matrix $W(n,p)$, $n\ne p$, form a set of constant weight \emph{ternary} codes. Theorem 16 of \cite{oster} relates, though in disguise, to an optimal class of constant ternary codes from weighing matrices. There seems to be no more optimal constant weight codes explicitly related to the weighing matrices in the literature. A large class of optimal constant weight ternary codes are shown to arise from weighing matrices in Section 3 and Section 4 of this paper.
 
\section{Preliminaries}
Let $S_3=\{0,1,-1\}$. A \emph{ternary} code of length $n$ is any subset $C$ of $S_3^n$. Elements of $C$ are called \emph{codewords}. The \emph{Hamming distance}  between two ternary codewords of length $n$ is the number of coordinates in which they differ. The number of nonzero entries of a codeword is the \emph{weight} of the code. A ternary code of length $n$ containing $M$ codewords and having minimum Hamming distance $d$ is denoted $(n,M,d)$-code. If all the codewords have the same number of nonzero entires the code is said to be of \emph{constant weight}. The largest value of $M$ for which there is a ternary code of length $n$, minimum distance $d$ and weight $w$ is denoted by $A_3(n,d,w)$ and the code is said to be \emph{optimal}. 
$\ddot{O}$sterg$\dot{a}$rd et al in \cite{oster} among other interesting results have shown that if $p\ge 3$ is a prime power and $m\ge 1$, then
 $A_3(p^m + 1,(p^m + 3)/2,p^m)=2(p^m+1)$. To show an extension of this result the restricted Johnson bound for $A_3(n,d,w)$ is essential, see Theorem 2.3.4 of \cite{pless}.

\begin{theorem}\label{J1}
If $3w^2-4nw+2nd>0$, then
\begin{align}
A_3(n,d,w)&\le \bigg\lfloor\frac{2nd}{3w^2-4nw+2nd}\bigg\rfloor. \label{eq:1}
\end{align}
\end{theorem}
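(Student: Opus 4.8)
The plan is to run the standard second-moment (Johnson-type) double counting argument on an optimal code, the only ternary-specific twist being that the nonzero symbols come in two flavours $+1$ and $-1$, which contributes an extra factor $\tfrac12$.

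Fix a constant weight ternary $(n,M,d)$ code $C$ of weight $w$ with $M=A_3(n,d,w)$, and count the sum $\Sigma:=\sum_{x,y\in C}d(x,y)$ over ordered pairs in two ways. On one hand, since $d(x,x)=0$ and $d(x,y)\ge d$ whenever $x\ne y$, we get $\Sigma\ge M(M-1)d$. On the other hand, for each coordinate $i\in\{1,\dots,n\}$ let $a_i$, $b_i$, $c_i$ denote the number of codewords of $C$ having $+1$, $-1$, $0$ respectively in position $i$, so $a_i+b_i+c_i=M$. The number of ordered pairs $(x,y)$ with $x_i\ne y_i$ is $M^2-(a_i^2+b_i^2+c_i^2)$, hence $\Sigma=\sum_{i=1}^n\bigl(M^2-a_i^2-b_i^2-c_i^2\bigr)=nM^2-\sum_{i=1}^n(a_i^2+b_i^2+c_i^2)$.

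Next I would use the constant weight hypothesis: every codeword has exactly $w$ nonzero coordinates, so $\sum_{i=1}^n(a_i+b_i)=Mw$ and therefore $\sum_{i=1}^n c_i=M(n-w)$. To bound $\Sigma$ from above I need a lower bound on $\sum_i(a_i^2+b_i^2+c_i^2)$, obtained by convexity: first $a_i^2+b_i^2\ge\tfrac12(a_i+b_i)^2$ (this $\tfrac12$ is exactly the ternary feature the binary Johnson bound lacks), and then the power-mean (Cauchy--Schwarz) inequality over the $n$ coordinates gives $\sum_i(a_i+b_i)^2\ge \tfrac1n\bigl(\sum_i(a_i+b_i)\bigr)^2=\tfrac{M^2w^2}{n}$ and $\sum_i c_i^2\ge \tfrac1n\bigl(\sum_i c_i\bigr)^2=\tfrac{M^2(n-w)^2}{n}$. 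Combining, $\sum_i(a_i^2+b_i^2+c_i^2)\ge \tfrac{M^2w^2}{2n}+\tfrac{M^2(n-w)^2}{n}$.

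Finally I would assemble the pieces: $M(M-1)d\le \Sigma\le nM^2-\tfrac{M^2w^2}{2n}-\tfrac{M^2(n-w)^2}{n}$. Dividing by $M$ and simplifying the right-hand side to $M\cdot\tfrac{4nw-3w^2}{2n}$ yields $(M-1)d\le M\cdot\tfrac{4nw-3w^2}{2n}$, i.e. $M\bigl(2nd-4nw+3w^2\bigr)\le 2nd$. This is where the hypothesis enters: when $3w^2-4nw+2nd>0$ the coefficient of $M$ is positive, so $M\le \tfrac{2nd}{3w^2-4nw+2nd}$, and since $M$ is an integer we may take the floor, giving \eqref{eq:1}. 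I do not anticipate a real obstacle here; the only points needing care are getting the factor $\tfrac12$ right when splitting nonzero symbols and keeping track of signs, so that the condition $3w^2-4nw+2nd>0$ is precisely what makes the final division legitimate.
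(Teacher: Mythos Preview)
Your argument is correct and is precisely the standard second-moment (Plotkin/Johnson) derivation of the restricted Johnson bound for ternary constant weight codes; the algebra leading to $(M-1)d\le M\cdot\frac{4nw-3w^2}{2n}$ and hence $M(3w^2-4nw+2nd)\le 2nd$ checks out. The paper itself does not give a proof of this theorem: it is quoted as background (Theorem~2.3.4 of \cite{pless}) and used as a tool, so there is no alternative argument in the paper to compare yours against.
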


A weighing matrix $W(n,p)$ is said to be in \emph{normal form}  if  {it} has the block configuration 
$$
\begin{bmatrix} \mathbf{0}_{n-p} & R \\ \mathbf{1}_p & D  \end{bmatrix},
$$
for some  $(0,\pm 1)$-matrices $R$ 
and $D$, where $\mathbf{0}_{n-p}$ is the  $(n-p)\times 1$ column vector with all entries $0$ and $\mathbf{1}_p$ is the $p\times 1$ column vector of all entries $1$.  We call $R$ the \emph{residual} and $D$  the \emph{derived} parts of the weighing matrix. It follows  that $RR^t = pI_{n-p}$, and $DD^t = pI_{p} - J_p$, and $RD^t = DR^t = 0$.  {By} permuting and negating some rows, if necessary, every weighing matrix can be assumed to be in normal form. The Jacobsthal matrix, as described below, is used extensively in this paper, see \cite{see-yam}.

\begin{theorem}\label{Jacob}
There is a $(0,\pm1)$-matrix $Q$ of  an odd prime power order $p$ having zero on the diagonal and $\pm 1$ off-diagonal with row and column sum zero and $QQ^t=pI_p - J_p$.
\end{theorem}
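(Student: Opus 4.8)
The plan is to exhibit $Q$ explicitly as the \emph{Jacobsthal matrix} of the field $\mathbb{F}_p$ and verify each required property by elementary character-sum computations. Let $\chi\colon\mathbb{F}_p\to\{0,\pm1\}$ be the quadratic character: $\chi(x)=1$ if $x$ is a nonzero square, $\chi(x)=-1$ if $x$ is a non-square, and $\chi(0)=0$. Recall that $\chi$ is multiplicative on $\mathbb{F}_p^\times$ and that $\sum_{x\in\mathbb{F}_p}\chi(x)=0$, since among the $p-1$ nonzero elements exactly $(p-1)/2$ are squares and $(p-1)/2$ are non-squares. Index the rows and columns of $Q$ by the elements of $\mathbb{F}_p$ and set $Q_{a,b}=\chi(a-b)$.

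First I would record the easy properties. The diagonal entries are $Q_{a,a}=\chi(0)=0$, and for $a\ne b$ we have $a-b\ne 0$, so $Q_{a,b}=\pm1$; thus $Q$ is a $(0,\pm1)$-matrix of order $p$ with zero diagonal and $\pm1$ off the diagonal. Substituting $c=a-b$, the $a$-th row sum is $\sum_{b}\chi(a-b)=\sum_{c\in\mathbb{F}_p}\chi(c)=0$, and similarly each column sum $\sum_{a}\chi(a-b)$ vanishes.

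It remains to compute $QQ^t$, whose $(a,b)$ entry is $\sum_{c\in\mathbb{F}_p}\chi(a-c)\chi(b-c)$. When $a=b$ this equals $\sum_{c\ne a}\chi(a-c)^2=p-1$. When $a\ne b$, the term $c=b$ contributes $0$, and for $c\ne b$ multiplicativity together with $\chi((b-c)^2)=1$ gives $\chi(a-c)\chi(b-c)=\chi\big((a-c)(b-c)\big)=\chi\big(1+(a-b)(b-c)^{-1}\big)$. As $c$ runs over $\mathbb{F}_p\setminus\{b\}$, the map $c\mapsto(b-c)^{-1}$ is a bijection onto $\mathbb{F}_p^\times$; multiplying by the nonzero constant $a-b$ and adding $1$ keeps it a bijection onto $\mathbb{F}_p\setminus\{1\}$. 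Hence the sum equals $\sum_{u\ne 1}\chi(u)=\big(\sum_{u\in\mathbb{F}_p}\chi(u)\big)-\chi(1)=-1$. Therefore $QQ^t=(p-1)I_p-(J_p-I_p)=pI_p-J_p$, as required.

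The only genuinely non-routine step is this last character-sum identity (the Jacobsthal lemma), and within it the point deserving care is verifying that $c\mapsto 1+(a-b)(b-c)^{-1}$ is indeed a bijection from $\mathbb{F}_p\setminus\{b\}$ onto $\mathbb{F}_p\setminus\{1\}$; once that is granted, the remaining identities are bookkeeping.
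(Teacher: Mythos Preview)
Your proof is correct and is the standard construction via the quadratic character and the Jacobsthal character-sum identity; every step checks, including the bijection $c\mapsto 1+(a-b)(b-c)^{-1}$ from $\mathbb{F}_p\setminus\{b\}$ onto $\mathbb{F}_p\setminus\{1\}$. The paper itself does not prove this theorem at all---it merely states the result and cites Seberry--Yamada for details---so there is no in-paper argument to compare against; you have supplied exactly the classical proof that the cited reference would give.
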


\begin{theorem}\label{con-code}
Let $C$ be a conference matrix $W(n+1,n)$ with the $n\times n$ matrix $D$ being the derived part of $C$. The rows of $D$ form an optimal constant weight ternary code with minimum distance $\frac{n+3}{2}$ and $A_3(n,\frac{n+3}{2},n-1)=n$.
\end{theorem}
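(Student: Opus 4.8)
The plan is to exhibit the $n$ rows of $D$ as an explicit code meeting the restricted Johnson bound of Theorem~\ref{J1}, and to show it is in fact equidistant.

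First I would record the linear-algebraic data. Writing $C$ in normal form $C=\begin{bmatrix}0 & R\\ \mathbf 1_n & D\end{bmatrix}$, where $R$ is a single row of length $n$, the identity $CC^t=nI_{n+1}$ yields $RR^t=n$, $RD^t=0$ and $DD^t=nI_n-J_n$; since $C$ is square it is invertible with $C^{-1}=\frac1n C^t$, so $C^tC=nI_{n+1}$ as well, which gives $\mathbf 1_n^tD=0$ and $D^tD=nI_n-R^tR$. Because $R$ is a $(\pm1)$-vector, the diagonals of both $DD^t$ and $D^tD$ are $(n-1,\dots,n-1)$; hence every row and every column of the $(0,\pm1)$-matrix $D$ has squared norm $n-1$, i.e. contains exactly one zero entry. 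Thus the $n$ zeros of $D$ lie in $n$ distinct rows and $n$ distinct columns, so they form a permutation pattern.

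The crucial consequence is that two distinct rows $x,y$ of $D$ have supports $[n]\setminus\{c_x\}$ and $[n]\setminus\{c_y\}$ with $c_x\neq c_y$, so their common support $T$ has size exactly $n-2$. On $T$ both vectors take values in $\{\pm1\}$; if $a$ and $b$ count the positions of $T$ where they agree and disagree, then $a+b=n-2$ and $a-b=\langle x,y\rangle=-1$ (an off-diagonal entry of $DD^t=nI_n-J_n$), forcing $b=\frac{n-1}{2}$ — in particular $n$ is odd, as it must be since a conference matrix has even order. Outside $T$ the two remaining coordinates are exactly $c_x$ and $c_y$, and at each of them one vector is zero and the other nonzero, so each contributes $1$ to the Hamming distance. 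Hence $d(x,y)=b+2=\frac{n+3}{2}$ for every pair of distinct rows. Therefore the rows of $D$ form an equidistant ternary code of length $n$, constant weight $n-1$, size $n$ and minimum distance $\frac{n+3}{2}$, which proves $A_3(n,\frac{n+3}{2},n-1)\ge n$.

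For the matching upper bound I would apply Theorem~\ref{J1} with $w=n-1$ and $d=\frac{n+3}{2}$: a short computation gives $3w^2-4nw+2nd=n+3>0$, so the hypothesis holds and $A_3\big(n,\frac{n+3}{2},n-1\big)\le\big\lfloor \frac{2nd}{n+3}\big\rfloor=\big\lfloor \frac{n(n+3)}{n+3}\big\rfloor=n$. Combining the two inequalities gives equality. The only nonroutine step is the combinatorial observation that the zeros of $D$ form a permutation pattern; this is precisely what pins the common support of any two rows to size $n-2$ and makes the code equidistant, after which everything reduces to arithmetic with the Johnson bound.
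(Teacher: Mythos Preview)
Your proof is correct and follows essentially the same route as the paper: use $DD^t=nI_n-J_n$ to see that any two rows have inner product $-1$, translate this into the Hamming distance $\tfrac{n+3}{2}$, and then invoke the restricted Johnson bound with the computation $3w^2-4nw+2nd=n+3$. Your additional argument that the zeros of $D$ form a permutation pattern (via $D^tD=nI_n-R^tR$) makes explicit a point the paper leaves tacit, namely that the common support of two rows has size exactly $n-2$, but the overall strategy is the same.
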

\begin{proof}
The inner product of two distinct rows of $D$ is $-1$. There are thus $(n-1)/2$ minus ones and $(n-3)/2$ plus ones in the inner product. It follows that the distance between any two rows is $\frac{n-1}{2}+2=\frac{n+3}{2}$. Considering that $3w^2-4nw+2nd=3(n-1)^2-4n(n-1)+2n(\frac{n+3}{2})=n+3$, it follows from Johnson bound (\ref{eq:1}) above that $A_3(n,\frac{n+3}{2},n-1)\le n$. This completes the proof.
\end{proof}

A second Johnson bound which will be used is as follows.

\begin{theorem}\label{J2} 
\begin{align}
A_3(n,d,w)&\le \bigg\lfloor\frac{2n}{w}A_3(n-1,d,w-1).\bigg\rfloor \label{eq:2}
\end{align}
\end{theorem}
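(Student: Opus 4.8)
The plan is a standard double-counting argument, averaging over coordinates. Fix an optimal constant-weight code $C\subseteq S_3^n$ of minimum distance $d$ and weight $w$, so that $|C|=A_3(n,d,w)$. For each coordinate $i\in\{1,\dots,n\}$ and each $\alpha\in\{1,-1\}$, let $C_{i,\alpha}$ be the set of codewords $c\in C$ with $c_i=\alpha$, and let $C'_{i,\alpha}$ be the length-$(n-1)$ code obtained by deleting the $i$-th coordinate from every word of $C_{i,\alpha}$.

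First I would check that $C'_{i,\alpha}$ is again a legitimate ternary constant-weight code to which $A_3(n-1,d,w-1)$ applies. Deleting a coordinate holding the nonzero value $\alpha$ removes exactly one nonzero entry, so every word of $C'_{i,\alpha}$ has weight $w-1$. Moreover every word of $C_{i,\alpha}$ carries the same symbol $\alpha$ in position $i$, so deleting that position changes no pairwise Hamming distance; in particular distinct words stay distinct and the minimum distance remains at least $d$. Hence $|C_{i,\alpha}|=|C'_{i,\alpha}|\le A_3(n-1,d,w-1)$ for all $i$ and $\alpha$.

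Summing over the $n$ coordinates and the two nonzero symbols gives $\sum_{i=1}^{n}\sum_{\alpha\in\{1,-1\}}|C_{i,\alpha}|\le 2n\,A_3(n-1,d,w-1)$. But the left-hand side counts the pairs $(c,i)$ with $c\in C$ and $c_i\neq 0$, and for a fixed $c$ the number of such $i$ is precisely its weight $w$; thus the sum equals $w\,|C|=w\,A_3(n,d,w)$. Dividing by $w$ gives $A_3(n,d,w)\le\frac{2n}{w}A_3(n-1,d,w-1)$, and since the left side is an integer the floor may be inserted on the right, which is \eqref{eq:2}.

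I do not expect a genuine obstacle here; the only care needed is in the middle step — confirming that shortening on a fixed nonzero symbol both lowers the weight by exactly one and cannot decrease the minimum distance — together with the usual convention that $A_3(n-1,d,w-1)\ge 1$, so that the inequality is not vacuous when some $C_{i,\alpha}$ happens to be empty or a singleton.
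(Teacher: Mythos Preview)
Your argument is correct and is the standard proof of this Johnson-type recursion; the paper itself does not prove Theorem~\ref{J2} but merely quotes it as a known bound (cf.\ the reference to \cite{pless}), so there is nothing to compare.
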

 
 As an application of Johnson bound (\ref{eq:2}) and Theorem \ref{con-code} above a large set of optimal ternary codes are obtained in the next theorem.
 
 \begin{theorem}\label{con-main}
 Let $C$ be a conference matrix of order $n+1$. Then the rows of $C$ and $-C$ together form an optimal constant weight ternary code and so $$A_3(n+1,\frac{n+3}{2},n)=2(n+1).$$
 \end{theorem}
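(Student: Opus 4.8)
The plan is to combine the two Johnson bounds with the distance computation already carried out in Theorem~\ref{con-code}. First I would verify the lower bound: the rows of $C$ form a constant weight ternary code of length $n+1$ and weight $n$, since a conference matrix $W(n+1,n)$ has exactly one zero per row. Adjoining the rows of $-C$ doubles the number of codewords to $2(n+1)$. I must check that no codeword is repeated and that the minimum distance does not drop. For two rows of $C$ (or two rows of $-C$) the inner product is $0$, so among the $n$ positions where both are nonzero there are $n/2$ agreements and $n/2$ disagreements, giving distance $n/2$; wait—here I should instead work in normal form and use the derived part, as in Theorem~\ref{con-code}, so that the relevant distances come out to be at least $\frac{n+3}{2}$. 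The distance between a row $r$ of $C$ and $-r$ is $n$ (they differ in every nonzero coordinate), which exceeds $\frac{n+3}{2}$ for $n\ge 3$. The distance between a row $r$ of $C$ and $-s$ for a \emph{different} row $s$ equals $n+1$ minus the distance between $r$ and $s$, reflected through negation; since the inner product of $r$ and $s$ is $0$ one gets the same value $\frac{n+3}{2}$ after accounting for the zero positions. So the code is an $(n+1,\,2(n+1),\,\frac{n+3}{2})$ constant weight ternary code, establishing $A_3(n+1,\frac{n+3}{2},n)\ge 2(n+1)$.

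For the upper bound, the natural route is Johnson bound~(\ref{eq:2}) with $n\mapsto n+1$, $d=\frac{n+3}{2}$, $w=n$:
\begin{align*}
A_3\Bigl(n+1,\tfrac{n+3}{2},n\Bigr)\le \Bigl\lfloor \tfrac{2(n+1)}{n}\,A_3\Bigl(n,\tfrac{n+3}{2},n-1\Bigr)\Bigr\rfloor.
\end{align*}
By Theorem~\ref{con-code} we have $A_3(n,\frac{n+3}{2},n-1)=n$, hence the right-hand side is $\bigl\lfloor \frac{2(n+1)}{n}\cdot n\bigr\rfloor = 2(n+1)$. Combined with the lower bound this gives equality, and the code built from the rows of $C$ and $-C$ is therefore optimal.

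The main obstacle I anticipate is the bookkeeping in the lower-bound step: one has to be careful that passing to $-C$ really does add $n+1$ genuinely new codewords (a conference matrix has no row equal to the negative of another, since any two distinct rows are orthogonal while $r$ and $-r$ have inner product $-n\ne 0$), and that every cross-distance between a row of $C$ and a row of $-C$ is at least $\frac{n+3}{2}$ rather than something smaller. This is cleanest if one first puts $C$ in normal form $\begin{bmatrix}\0_{n} & R\\ \mathbf 1_n & D\end{bmatrix}$ — wait, the sizes here: for $W(n+1,n)$ the residual block is $1\times n$, so normal form reads $\begin{bmatrix} 0 & \mathbf 1_n^t \\ \mathbf 1_n & D\end{bmatrix}$ — and then the distance analysis among rows of $D$, $-D$, and the all-ones-type rows reduces to the inner-product count already done in Theorem~\ref{con-code}, with the single extra case of $r$ versus $-r$ handled trivially. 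Everything else is the routine substitution into~(\ref{eq:2}).
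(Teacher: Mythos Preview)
Your approach is exactly the paper's: upper bound via Theorem~\ref{J2} combined with Theorem~\ref{con-code}, lower bound by putting $C$ in normal form $\begin{bmatrix} 0 & \mathbf{1}_n^t\\ \mathbf{1}_n & D\end{bmatrix}$ and checking the cross-distances between rows of $C$ and rows of $-C$ case by case.

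One slip to fix in your write-up: the relation ``$d(r,-s)=n+1-d(r,s)$'' is false for ternary words with zeros. For two distinct rows $r,s$ of $C$ the zeros lie in different positions, so there are $n-1$ positions where both are nonzero and $2$ positions where exactly one is zero; the correct identity is $d(r,s)+d(r,-s)=(n-1)+2\cdot 2=n+3$, whence $d(r,-s)=(n+3)-\tfrac{n+3}{2}=\tfrac{n+3}{2}$. Your planned normal-form case analysis (which is precisely what the paper carries out) recovers this, but the intermediate formula as stated would give the wrong value $\tfrac{n-1}{2}$.
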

 \begin{proof}
 From Theorem \ref{con-code} we know that $A_3(n,\frac{n+3}{2},n-1)= n$.  By Johnson bound (\ref{eq:2}):
 \begin{align*}A_3(n+1,\frac{n+3}{2},n)&\le \frac{2(n+1)}{n}A_3(n,\frac{n+3}{2},n-1)\\&=\frac{2(n+1)}{n}n=2(n+1).\end{align*}
 For convenience we may assume that 
 \[
 \begin{bmatrix} C\\-C\end{bmatrix}=\begin{bmatrix} 0 & \mathbf{1}_n^t\\\mathbf{1}_n & D\\0 &-\mathbf{1}_n^t \\-\mathbf{1}_n & -D \end{bmatrix}.\] 
 The codewords are of length $n+1$ and weight $n$. The minimum distance in $C$ and $-C$ stays as $\frac{n+3}{2}$. The only part requiring attention is taking a row $i$ of $D$ and $-j$ in $-D$. If $i=|-j|$, then the distance between  row $i$ of $D$ and $-j$ of $-D$ is $n-1$ and thus the distance between the two longer rows is $n$. For $i\ne |-j|$ the inner product of row $i$ of $D$ and $-j$ of $-D$ is one and the distance between the two rows is $\frac{n-3}{2}+2=\frac{n+1}{2}$. The entries in the first column now make the difference and the minimum distance between the longer rows stays as $\frac{n+3}{2}$. This completes the proof.
 \end{proof}
 Theorem \ref{con-main} extends Theorem 16 of \cite{oster}. 
  
 \begin{corollary}\label{cor}
 Let $p$ be an odd prime power, then $A_3(p^m+1,\frac{p^m+3}{2},p^m)=2(p^m+1)$ for every positive integer $m$.
 \end{corollary}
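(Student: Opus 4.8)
The plan is to obtain this as an immediate application of Theorem \ref{con-main}, once we know that a conference matrix of order $p^m+1$ exists for every odd prime power $p$ and every integer $m\ge 1$. Since $p^m$ is again an odd prime power, Theorem \ref{Jacob} (applied with $p^m$ in place of $p$) supplies a $(0,\pm1)$-matrix $Q$ of order $p^m$ with zero diagonal, all off-diagonal entries $\pm1$, every row and column sum equal to $0$, and $QQ^t=p^mI_{p^m}-J_{p^m}$.

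The next step is to border $Q$: set
\[
C=\begin{bmatrix}0 & \mathbf{1}_{p^m}^t\\ \mathbf{1}_{p^m} & Q\end{bmatrix},
\]
which is a $(0,\pm1)$-matrix of order $p^m+1$. A short verification shows $CC^t=p^mI_{p^m+1}$: the first row has squared norm $p^m$; the inner product of the first row with any later row equals the corresponding row sum of $Q$, hence is $0$; each later row has squared norm $1+(p^m-1)=p^m$; and the inner product of two distinct later rows equals $1$ plus the corresponding off-diagonal entry of $QQ^t$, i.e.\ $1+(-1)=0$. Thus $C$ is a conference matrix $W(p^m+1,p^m)$.

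Finally, since $p^m$ is odd the quantity $\frac{p^m+3}{2}$ is an integer, and applying Theorem \ref{con-main} with $n=p^m$ yields $A_3(p^m+1,\frac{p^m+3}{2},p^m)=2(p^m+1)$. There is essentially no obstacle here: all the force is carried by Theorems \ref{con-main} and \ref{Jacob}, and the only thing to check is the routine bordering identity above. One could alternatively just cite the classical fact that the Paley/Jacobsthal construction produces conference matrices of order $q+1$ for every odd prime power $q$, but spelling out the bordering keeps the argument self-contained, in the spirit of the proof of Theorem \ref{con-code}.
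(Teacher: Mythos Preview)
Your argument is correct and follows essentially the same route as the paper: both deduce the corollary from Theorem~\ref{con-main} once a conference matrix $W(p^m+1,p^m)$ is in hand. The paper simply asserts the existence of such a conference matrix, while you make this explicit by bordering the Jacobsthal matrix from Theorem~\ref{Jacob}; your verification that the bordered matrix satisfies $CC^t=p^mI_{p^m+1}$ is accurate.
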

 \begin{proof}
 There is a conference matrix $W(p^m+1,p^m)$ for every odd prime power $p$ and positive integer $m$ and 
the result follows.
\end{proof}
\begin{remark} There are many orders not covered by the Theorem 16 of \cite{oster}. For example, there is a conference matrix $W(16,15)$ and $15$ is not a prime number.
\end{remark}

The main result of the paper is in part an application  of orthogonal arrays. 
\begin{definition}
Let $S = \{1,2,\dots,q\}$ be some finite alphabet. An \emph{orthogonal array} of strength $t$ and index $\lambda$ is an $N \times k$ matrix over $S$ such that in every $N \times t$ subarray, each $t$-tuple in $S^t$ appears $\lambda$ times. We denote this property as $OA_\lambda(N,k,q,t)$. 
\end{definition}
\begin{theorem}\label{oa}
For the prime power $p$ and the positive integer $m$ there is a $$p^{m+1}\times\left(\frac{p^{m+1}-1}{p-1}\right)$$ 
array ${\bf O}$ in  $p$ symbols  
such that any two distinct rows share a common symbol in exactly $$\frac{p^m-1}{p-1}$$  columns.
\end{theorem}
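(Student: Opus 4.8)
The plan is to construct $\mathbf{O}$ directly from the linear algebra of the vector space $V=\mathbb{F}_p^{m+1}$, where $\mathbb{F}_p$ denotes the finite field of order $p$, identifying the $p$ symbols of the array with the elements of $\mathbb{F}_p$. I would index the $p^{m+1}$ rows of $\mathbf{O}$ by the vectors $v\in V$, and its columns by the one-dimensional subspaces $\ell\subset V$; there are exactly $\frac{p^{m+1}-1}{p-1}$ of the latter, since each contains $p-1$ of the $p^{m+1}-1$ nonzero vectors. For every such $\ell$ I would fix once and for all a nonzero representative $w_\ell\in\ell$, and define the $(v,\ell)$ entry of $\mathbf{O}$ to be the standard bilinear pairing $\langle v,w_\ell\rangle\in\mathbb{F}_p$.

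The first step is to observe that, although the individual entries depend on the chosen representatives --- replacing $w_\ell$ by $cw_\ell$ with $c\in\mathbb{F}_p\setminus\{0\}$ rescales the entire $\ell$-th column by $c$ --- the \emph{pattern of agreements} between rows does not. Indeed, two distinct rows $u$ and $v$ carry the same symbol in column $\ell$ precisely when $\langle u,w_\ell\rangle=\langle v,w_\ell\rangle$, i.e.\ when $\langle u-v,w_\ell\rangle=0$, a condition insensitive to rescaling $w_\ell$ and hence depending only on the subspace $\ell$. Setting $x=u-v\neq 0$, the columns in which $u$ and $v$ agree are therefore exactly the one-dimensional subspaces contained in the orthogonal complement $x^{\perp}=\{\,y\in V:\langle x,y\rangle=0\,\}$.

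The second step is a dimension count. Since $x\neq 0$, the linear functional $y\mapsto\langle x,y\rangle$ on $V$ is nonzero, hence surjective, so $x^{\perp}$ is a hyperplane, an $m$-dimensional subspace; it contains $p^m$ vectors and therefore $\frac{p^m-1}{p-1}$ one-dimensional subspaces. This count does not depend on the chosen pair $u\neq v$, which is exactly the assertion. (If one also wishes to see that $\mathbf{O}$ is an orthogonal array of strength $2$, the same picture does it: two distinct columns correspond to linearly independent $w_{\ell_1},w_{\ell_2}$, so $v\mapsto(\langle v,w_{\ell_1}\rangle,\langle v,w_{\ell_2}\rangle)$ maps $V$ onto $\mathbb{F}_p^2$ and each ordered pair of symbols occurs in exactly $p^{m-1}$ rows.)

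The only delicate point --- and it is conceptual rather than a computational obstacle --- is the observation made in the first step: there is no canonical choice of the representatives $w_\ell$, so $\mathbf{O}$ is only well defined up to rescaling columns, and the argument must be run through the difference $x=u-v$ instead of through the entries themselves. Once this is noticed, the remainder is just the routine count of one-dimensional subspaces of $\mathbb{F}_p^{m+1}$ and of the hyperplane $x^{\perp}$.
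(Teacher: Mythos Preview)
Your argument is correct. The construction you describe --- indexing rows by vectors of $\mathbb{F}_p^{m+1}$, columns by one-dimensional subspaces, and filling the $(v,\ell)$ entry with $\langle v,w_\ell\rangle$ --- is the standard linear orthogonal array $OA_{p^{m-1}}(p^{m+1},\frac{p^{m+1}-1}{p-1},p,2)$, and your count of agreements via the hyperplane $(u-v)^\perp$ is exactly the right way to verify the stated property.

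There is nothing to compare against here: the paper does not actually prove this theorem but simply records it and refers the reader to \cite{KPS} for details. Your proof therefore supplies a self-contained justification that the paper omits. One minor remark: you frame the choice of representatives $w_\ell$ as a ``delicate point,'' but in fact the theorem only asserts existence of such an array, so any fixed choice works and no subtlety arises; the well-definedness discussion, while not wrong, is not needed for the statement as given.
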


Theorem \ref{oa}, see \cite{KPS} for details, will be used extensively in the next section. The main reference for orthogonal arrays is  \cite{Hed}.

\section{The main construction}

There was no condition imposed on the weight of a conference matrix in Theorem \ref{con-main}. In this section the flat assumption is that all conference matrices have odd prime power weights, unless otherwise specified.  The following Lemma is immediate.

\begin{lemma}
Let $W$ be a weighing matrix $W(n,p)$ whose rows form constant  ternary codes of length $n$, weight $p$ and minimum distance $d$. Then the rows of 
$W\otimes \begin{bmatrix} \mathbf{1}  ^t_q\end{bmatrix}$ are mutually orthogonal and form constant weight ternary codes of length $qn$, weight $pq$ and minimum distance $qd$.
\end{lemma}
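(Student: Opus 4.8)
The plan is to read off all three assertions directly from the block structure of the Kronecker product, using nothing beyond the defining identity $WW^t=pI_n$ and the mixed-product rule $(A\otimes B)(C\otimes D)=(AC)\otimes(BD)$. Write $M=W\otimes\mathbf{1}_q^t$, an $n\times qn$ matrix whose $i$th row is the $i$th row of $W$ with each entry replaced by $q$ consecutive copies of itself. For orthogonality, compute
\[
MM^t=(W\otimes\mathbf{1}_q^t)(W^t\otimes\mathbf{1}_q)=(WW^t)\otimes(\mathbf{1}_q^t\mathbf{1}_q)=(pI_n)\otimes(q)=pq\,I_n,
\]
so the rows of $M$ are mutually orthogonal and each has squared Euclidean norm $pq$.

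For the weight, note that since $W=W(n,p)$ has entries in $\{0,\pm1\}$ and each of its rows has squared norm $p$, every row of $W$ has exactly $p$ nonzero entries. Replacing each entry of a row by $q$ copies of itself multiplies the number of nonzero entries by $q$, so every row of $M$ has exactly $pq$ nonzero entries; hence the rows of $M$ form a constant weight ternary code of length $qn$ and weight $pq$.

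For the minimum distance, fix two rows $r_i,r_j$ of $W$. A coordinate $k$ in which $w_{ik}\neq w_{jk}$ gives rise to all $q$ positions of the $k$th block being distinct in the corresponding rows of $M$, while a coordinate with $w_{ik}=w_{jk}$ contributes no distinct positions. Therefore the Hamming distance between the $i$th and $j$th rows of $M$ is exactly $q$ times the Hamming distance between the $i$th and $j$th rows of $W$; minimizing over all pairs gives minimum distance $qd$. The argument has no genuine obstacle — everything reduces to the mixed-product rule and the blockwise expansion — and the only point deserving a line of justification is the elementary observation that the rows of a weighing matrix have weight exactly $p$, which is what makes the tensored code constant weight rather than merely of bounded weight.
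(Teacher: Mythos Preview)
Your proof is correct and is exactly the direct verification the paper has in mind; the paper itself offers no proof, calling the lemma ``immediate.'' Your use of the mixed-product rule for orthogonality and the blockwise count for weight and distance is the natural way to unpack that remark.
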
 
 
 The main construction is recursive and consists of two parts. Before proceeding to the main construction an example is helpful.
 
 \begin{example}
 For the prime $p=3$ 
 \begin{align}  A_3(13,9,9)= & \,26. \label{eq:3}
 \end{align}
 In order to show equation (\ref{eq:3}) the construction is broken into seven steps.
 \begin{enumerate}
 \item Starting with a normalized $W(4,3)$ ($-1$ is denoted by $-$):
 \[W=\begin{bmatrix} 0 & 1 & 1 &1\\1 & 0 & 1 &-\\1 & - & 0 & 1\\1 & 1 & - &0 \end{bmatrix}.\]
 \item The derived part of $W$ is
  \[D=\begin{bmatrix} 0 & 1 &-\\- & 0 & 1\\1 & - &0 \end{bmatrix}.\]
\item A corresponding orthogonal array from Theorem \ref{oa} on symbols $\{1,2,3\}$ is
\small{
\[\begin{array}{cccc}
1 & 1 & 1 &1\\
1 & 2 & 2 & 2\\
1 & 3 & 3 &3\\
2 & 1 & 2 &3\\
2 & 3 & 1 & 2\\
2 & 2 & 3 &1\\
3 & 1 & 3 &2\\
3 & 2 & 1 & 3\\
3 & 3 & 2 & 1
\end{array}.\]}
 \item Replacing the symbol $i$ with row $i$ of $D$ to find  a matrix $\mathcal{D}$:
 \small{
\[ \left[ \begin {array}{cccccccccccc} 0&1& -&0&1& -&0&1& -&0&1& -
\\  0&1& -& -&0&1& -&0&1& -&0&1\\  0
&1& -&1& -&0&1& -&0&1& -&0\\   -&0&1&0&1& -& -&0&1&1&
 -&0\\   -&0&1&1& -&0&0&1& -& -&0&1
\\   -&0&1& -&0&1&1& -&0&0&1& -\\  1
& -&0&0&1& -&1& -&0& -&0&1\\  1& -&0& -&0&1&0&1& -&1&
 -&0\\  1& -&0&1& -&0& -&0&1&0&1& -\end {array}
 \right]. \]}
 Applying the restricted Johnson  bound from Theorem \ref{J1}, we can see that the rows of this matrix form  optimal constant weight ternary codes of length $n=12$, weight $w=8$, minimum distance $d=9$ and $A_3(12,9,8)=9$. Applying the second Johnson bound, Theorem \ref{J2}, we note that $A_3(13,9,9)=26$. 
\item Forming a matrix $\mathcal{R}=W \otimes [1 1 1]$:
\small{
\[
\left[ \begin {array}{cccccccccccc} 0&0&0&1&1&1&1&1&1&1&1&1
\\  1&1&1&0&0&0&1&1&1& -& -& -\\  1&
1&1& -& -& -&0&0&0&1&1&1\\  1&1&1&1&1&1& -& -& -&0&0&0
\end {array} \right]. \]}
\item Adding the two matrices $\mathcal{D}$ and $\mathcal{R}$ together with an additional  column of 1's and 0's the matrix
\small{
\[
\arraycolsep=1.0pt\def\arraystretch{1.0}
C=\begin{bmatrix} \mathbf{0} & \mathcal{R }\\ \mathbf{1} & \mathcal{D}  \end{bmatrix}=\\
\left[ \begin {array}{ccccccccccccc} 
0&0&0&0&1&1&1&1&1&1&1&1&1\\  0&1
&1&1&0&0&0&1&1&1& -& -& -\\  0&1&1&1& -& -& -&0&0&0&1
&1&1\\  0&1&1&1&1&1&1& -& -& -&0&0&0\\
1&0&1& -&0&1& -&0&1& -&0&1& -
\\  1&0&1& -& -&0&1& -&0&1& -&0&1
\\  1&0&1& -&1& -&0&1& -&0&1& -&0
\\  1& -&0&1&0&1& -& -&0&1&1& -&0
\\  1& -&0&1&1& -&0&0&1& -& -&0&1
\\  1& -&0&1& -&0&1&1& -&0&0&1& -
\\  1&1& -&0&0&1& -&1& -&0& -&0&1
\\  1&1& -&0& -&0&1&0&1& -&1& -&0
\\  1&1& -&0&1& -&0& -&0&1&0&1& -
  \end {array}
 \right]. \]}
is obtained.
 $C$ is a weighing matrix $W(13,9)$ and the rows of $C$ consist of 13 constant weight ternary codewords of length $n=13$, weight $w=9$ and constant distance $d=9$.
 
\item The final step is to form a $26\times 13$ matrix $$T_{13,9,9}=\begin{bmatrix} C\\-C\end{bmatrix}$$ whose rows form the codewords of  optimal constant weight ternary codes of length $n=13$, $w=9$, and $d=9$, demonstrating that $A_3(13,9,9)=26$. 
\begin{small}

\[
\arraycolsep=1.0pt\def\arraystretch{1.0}
T_{13,9,9}=
 \left[ \begin {array}{ccccccccccccc} 
0&0&0&0&1&1&1&1&1&1&1&1&1
\\  0&1&1&1&0&0&0&1&1&1& -& -& -
\\  0&1&1&1& -& -& -&0&0&0&1&1&1
\\  0&1&1&1&1&1&1& -& -& -&0&0&0
\\1&0&1& -&0&1& -&0&1& -&0&1& -
\\  1&0&1& -& -&0&1& -&0&1& -&0&1
\\  1&0&1& -&1& -&0&1& -&0&1& -&0
\\  1& -&0&1&0&1& -& -&0&1&1& -&0
\\  1& -&0&1&1& -&0&0&1& -& -&0&1
\\  1& -&0&1& -&0&1&1& -&0&0&1& -
\\  1&1& -&0&0&1& -&1& -&0& -&0&1
\\  1&1& -&0& -&0&1&0&1& -&1& -&0
\\  1&1& -&0&1& -&0& -&0&1&0&1& -
\\  0&0&0&0& -& -& -& -& -& -& -& -& -
\\  0& -& -& -&0&0&0& -& -& -&1&1&1
\\  0& -& -& -&1&1&1&0&0&0& -& -& -
\\  0& -& -& -& -& -& -&1&1&1&0&0&0
\\     -&0& -&1&0& -&1&0& -&1&0& -&1
\\   -&0& -&1&1&0& -&1&0& -&1&0& -
\\   -&0& -&1& -&1&0& -&1&0& -&1&0
\\   -&1&0& -&0& -&1&1&0& -& -&1&0
\\   -&1&0& -& -&1&0&0& -&1&1&0& -
\\   -&1&0& -&1&0& -& -&1&0&0& -&1
\\   -& -&1&0&0& -&1& -&1&0&1&0& -
\\   -& -&1&0&1&0& -&0& -&1& -&1&0
\\   -& -&1&0& -&1&0&1&0& -&0& -&1
\end {array}
 \right]. \]
 \end{small}
\end{enumerate}
\end{example}
\begin{remark}
The construction of ternary codes is recursive. The matrix $C$ obtained in the preceding construction is used next to show that $A_3(40,27,27)=80$, etc.
 \end{remark}
 
The first class of optimal ternary codes which will be used in the proof of the second class is introduced next.

\begin{theorem}\label{T1}
For any odd prime power $p$ and positive integer $m$ 
    $$A_3\left(p\left(\frac{p^{m+1}-1}{p-1}\right),p^{m}\left(\frac{p+3}{2}\right),p^{m+1}-1\right)=p^{m+1}.$$
\end{theorem}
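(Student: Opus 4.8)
The plan is to exhibit a code of size $p^{m+1}$ with exactly these parameters and then match it with the restricted Johnson bound. Following step 4 of the worked $p=3$, $m=1$ example: let $Q$ be the order-$p$ matrix of Theorem \ref{Jacob} (zero diagonal, $\pm1$ off-diagonal, $QQ^t=pI_p-J_p$), let ${\bf O}$ be the $p^{m+1}\times\frac{p^{m+1}-1}{p-1}$ orthogonal array on $p$ symbols provided by Theorem \ref{oa}, and form the matrix $\mathcal{D}$ by replacing each occurrence of the symbol $i$ in ${\bf O}$ by the $i$-th row of $Q$. This yields a $p^{m+1}\times p\!\left(\frac{p^{m+1}-1}{p-1}\right)$ matrix over $\{0,1,-1\}$, whose rows I would take as the codewords. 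Since each row of $Q$ has exactly $p-1$ nonzero entries, every row of $\mathcal{D}$ has weight $(p-1)\cdot\frac{p^{m+1}-1}{p-1}=p^{m+1}-1$, as required.

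For the distance, I would compare two rows of $\mathcal{D}$ arising from distinct rows $r,s$ of ${\bf O}$, block by block over the columns of ${\bf O}$: a column in which $r$ and $s$ carry the same symbol contributes $0$ to the Hamming distance, while a column in which they carry distinct symbols $i\neq j$ contributes the Hamming distance between rows $i$ and $j$ of $Q$. Because $QQ^t=pI_p-J_p$ these two rows have inner product $-1$, and since their zeros sit in positions $i$ and $j$, they differ in those two positions and in exactly $\frac{p-1}{2}$ of the remaining $p-2$ positions, i.e.\ in $\frac{p+3}{2}$ positions. By Theorem \ref{oa}, $r$ and $s$ agree in $\frac{p^m-1}{p-1}$ columns, hence differ in $\frac{p^{m+1}-1}{p-1}-\frac{p^m-1}{p-1}=p^m$ columns, so any two codewords are at Hamming distance exactly $p^m\!\left(\frac{p+3}{2}\right)$. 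This establishes $A_3\!\left(p\,\frac{p^{m+1}-1}{p-1},\,p^{m}\frac{p+3}{2},\,p^{m+1}-1\right)\ge p^{m+1}$.

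For the matching upper bound, write $n=p\!\left(\frac{p^{m+1}-1}{p-1}\right)$, $w=p^{m+1}-1$, $d=p^{m}\!\left(\frac{p+3}{2}\right)$; using $w=(p-1)n/p$ a short simplification collapses the Johnson quantity to $3w^2-4nw+2nd=\frac{(p^{m+1}-1)(p+3)}{p-1}>0$, while $2nd=p^{m+1}\cdot\frac{(p^{m+1}-1)(p+3)}{p-1}$, so Theorem \ref{J1} gives $A_3(n,d,w)\le\lfloor p^{m+1}\rfloor=p^{m+1}$, and combining the two bounds proves the claim. I do not expect any genuine obstacle: the construction is a direct blow-up of the orthogonal array by the Jacobsthal matrix, and the only points needing a little care are the count $\frac{p+3}{2}$ for the pairwise row-distance of $Q$ and the arithmetic identity that makes the restricted Johnson bound exactly tight, both of which are routine.
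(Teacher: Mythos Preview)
Your proposal is correct and follows essentially the same approach as the paper: blow up the orthogonal array of Theorem~\ref{oa} by the rows of the Jacobsthal matrix, count the pairwise Hamming distance as $p^m\cdot\frac{p+3}{2}$ via the coincidence count $\frac{p^m-1}{p-1}$, and close with the restricted Johnson bound, which is exactly tight. If anything, your write-up is slightly more explicit than the paper's in justifying the value $\frac{p+3}{2}$ for the row-distance of $Q$ and in displaying the arithmetic behind $3w^2-4nw+2nd=\frac{(p^{m+1}-1)(p+3)}{p-1}$.
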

\begin{proof}
Let $W$ be the Jacobsthal matrix of order $p$ described in Theorem \ref{Jacob} and note that the distance between any two rows is $\frac{p+3}{2}$. Consider the orthogonal array $\bf O$ in $p$ symbols of Theorem \ref{oa} corresponding to the positive integer $m$. By replacing the $p$ symbols with the rows of $D$ a $p^{m+1}\times p(\frac{p^{m+1}-1}{p-1})$ array, say $T_{p^{m+1}}$, is obtained in which any two distinct rows share exactly $\frac{p^m-1}{p-1}$ rows of $D$ in the same columns.  
Noting this, a careful calculation shows that the distance between any two rows, considered as ternary codes, is $$\bigg(\frac{p^{m+1}-1}{p-1}-\frac{p^{m}-1}{p-1}\bigg)\bigg(\frac{p+3}{2}\bigg)=p^{m}\bigg(\frac{p+3}{2}\bigg).$$ The weight of each code is $w=\big(\frac{p^{m+1}-1}{p-1}\big)(p-1)=p^{m+1}-1$. The rows of $T_{p^{m+1}}$ form the codewords of constant weight ternary code of length $n=p(\frac{p^{m+1}-1}{p-1})$, minimum distance $d=p^{m}(\frac{p+3}{2})$ and  weight $w=p^{m+1}-1$. The condition for the Johnson bound (\ref{eq:1}) is obtained to be $3w^2-4nw+2nd=(\frac{p+3}{p-1})(p^{m+1}-1)$. This and the Johnson bound shows that 
$$A_3\left(p\left(\frac{p^{m+1}-1}{p-1}\right),p^{m}\left(\frac{p+3}{2}\right),p^{m+1}-1\right)\le p^{m+1}.$$
This completes the proof.
\end{proof}

The second class of optimal ternary codes is introduced next.

 \begin{theorem}\label{main}
There is  a weighing matrix $W$ of order $n=\frac{p^{m+1}-1}{p-1}$ and weight $p^m$ for which 
    the rows of $W$ and $-W$ together form the codewords of an optimal constant weight ternary code of length $n$, weight $w=p^m$, and minimum distance $d=p^{m-1}(\frac{p+3}{2})$ for each odd prime power $p$ and integer $m\ge 1$ demonstrating that
    $$A_3\left(\frac{p^{m+1}-1}{p-1},p^{m-1}\left(\frac{p+3}{2}\right),p^{m}\right)=2\left(\frac{p^{m+1}-1}{p-1}\right).$$
\end{theorem}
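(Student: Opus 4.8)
The plan is to construct $W$ by recursion on $m$. For $m=1$ one has $n=p+1$, $w=p$, $d=\frac{p+3}{2}$, and a conference matrix $W(p+1,p)$ does the job: this base case is exactly Theorem~\ref{con-main}. For $m\ge 2$, write $n=\frac{p^{m+1}-1}{p-1}$ and $n'=\frac{p^m-1}{p-1}$ (the value of $n$ at level $m-1$) and record the identities $n-1=pn'$ and $n-p^m=n'$. Let $Q$ be the Jacobsthal matrix of order $p$ from Theorem~\ref{Jacob}, let $\mathbf{O}$ be the orthogonal array of Theorem~\ref{oa} attached to the integer $m-1$ (so $\mathbf{O}$ is $p^m\times n'$ and any two of its rows agree in exactly $\frac{p^{m-1}-1}{p-1}$ columns), and let $W'$ be the weighing matrix produced at level $m-1$. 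Replacing symbol $i$ of $\mathbf{O}$ by the $i$-th row of $Q$ yields a $p^m\times(n-1)$ matrix $\mathcal{D}$; setting $\mathcal{R}=W'\otimes\mathbf{1}_p^{t}$ and $W=\begin{bmatrix}\mathbf{0} & \mathcal{R}\\ \mathbf{1} & \mathcal{D}\end{bmatrix}$ gives the matrix we want. This is exactly the construction carried out in the worked example, and $\mathcal{D}$ is the array denoted $T_{p^m}$ in the proof of Theorem~\ref{T1}.

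First I would check that $W$ is a weighing matrix $W(n,p^m)$ (automatically in normal form). Here $\mathcal{R}\mathcal{R}^{t}=(W'(W')^{t})\otimes(\mathbf{1}_p^{t}\mathbf{1}_p)=p^m I$; the diagonal entries of $\mathcal{D}\mathcal{D}^{t}$ are $n'(p-1)=p^m-1$, while by the strength of $\mathbf{O}$ together with $QQ^{t}=pI-J$ every off-diagonal entry is $\frac{p^{m-1}-1}{p-1}(p-1)-\bigl(n'-\frac{p^{m-1}-1}{p-1}\bigr)=-1$, so $\mathcal{D}\mathcal{D}^{t}=p^m I-J$; and $\mathcal{R}\mathcal{D}^{t}=0$ because each row of $Q$ sums to $0$. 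In particular every row of $W$ has weight $p^m$.

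Next I would compute the pairwise Hamming distances among the rows of $W$, using the elementary identity that two ternary words of common weight $w$ and inner product $s$ are at distance $\tfrac12(w+3c-s)$, where $c$ is the number of coordinates in which the first is nonzero and the second is zero. Two ``top'' rows lie at $p$ times a distance between rows of $W'$, hence at distance $\ge p\cdot p^{m-2}\frac{p+3}{2}=d$ by the Lemma above and the inductive hypothesis; two ``bottom'' rows have $s=-1$ and $c=p^{m-1}$ (this uses the single diagonal zero in each row of $Q$ and the agreement number $\frac{p^{m-1}-1}{p-1}$ of $\mathbf{O}$), hence distance exactly $d$; a top/bottom pair has $s=0$ and $c=p^{m-1}$, hence distance exactly $d$. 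So the rows of $W$ form a constant weight ternary code of length $n$, weight $p^m$, minimum distance $d$. Passing to $\begin{bmatrix}W \\ -W\end{bmatrix}$: distances inside $W$ and inside $-W$ coincide and are $\ge d$; a row of $W$ and its own negative are at distance $p^m\ge d$ (since $p\ge\frac{p+3}{2}$); and a row of $W$ against a different row of $-W$ has $s=0$ and the very same $c$ as the matching pair in $W$, hence distance $\ge d$. Therefore these $2n$ words form a constant weight ternary $(n,2n,d)$-code, the value $d$ being attained, e.g., between two bottom rows of $W$.

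Finally, for the matching upper bound: the Johnson bound (\ref{eq:2}) gives $A_3(n,d,p^m)\le\bigl\lfloor\tfrac{2n}{p^m}A_3(n-1,d,p^m-1)\bigr\rfloor$, and Theorem~\ref{T1} applied with the integer $m-1$ (legitimate since $n-1=p\cdot\frac{p^m-1}{p-1}$ and $d=p^{m-1}\cdot\frac{p+3}{2}$) evaluates $A_3(n-1,d,p^m-1)=p^m$, so this bound is precisely $2n$; combined with the code just built, $A_3(n,d,p^m)=2n=2\bigl(\frac{p^{m+1}-1}{p-1}\bigr)$. The one place calling for care is the distance bookkeeping for two bottom rows, where one has to combine the fact that each row of $Q$ carries its unique zero on the diagonal with the exact agreement number furnished by the orthogonal array to conclude $c=p^{m-1}$ and $s=-1$; once the recursive skeleton and this count are set up, the rest reduces to routine arithmetic with $\frac{p^{m+1}-1}{p-1}$.
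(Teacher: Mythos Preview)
Your proof is correct and follows exactly the paper's recursive construction (induction on $m$, with $\mathcal{D}$ built from the Jacobsthal matrix via the orthogonal array of Theorem~\ref{oa} and $\mathcal{R}=W'\otimes\mathbf{1}_p^{t}$, assembled into normal form), and you supply more detail than the paper does---the explicit check that $W$ is a weighing matrix and the explicit upper bound via Theorem~\ref{T1} (at level $m-1$) combined with the second Johnson bound~(\ref{eq:2}). One tiny slip: for two full \emph{bottom} rows of $W$ the inner product is $s=0$ (they are orthogonal, the leading $1$'s cancelling the $-1$ coming from $\mathcal{D}\mathcal{D}^{t}$), not $s=-1$; with $w=p^{m}$ and $c=p^{m-1}$ your distance formula $\tfrac12(w+3c-s)$ then gives exactly $d$, so the conclusion is unaffected.
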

\begin{proof}
The proof is by induction on $m$. For $m=1$ the statement is to show that $A_3(p+1,\frac{p+3}{2},p)=2(p+1)$. By Theorems \ref{Jacob}, \ref{con-code} and Corollary \ref{cor}, there is a conference matrix $C=W(p+1,p)$ for which $A_3(p+1,\frac{p+3}{2},p)=2(p+1)$. \\ Assuming the existence of a weighing matrix $C_m$ of order $n=\frac{p^{m+1}-1}{p-1}$, weight $w=p^m$ and minimum distance $d=p^{m-1}(\frac{p+3}{2})$ we proceed in three steps.
\begin{enumerate}
\item Considering the $p^{m+1}\times p(\frac{p^{m+1}-1}{p-1})$  matrix $\mathcal{D}$ constructed in Theorem \ref{T1}, the   $(0,\pm 1)$ rows form $p^{m+1}$ codewords of length 
$p(\frac{p^{m+1}-1}{p-1})$, weight $w=p^{m+1}-1$, and minimum distance $p^{m}(\frac{p+3}{2})$.

\item Let $\mathcal{R}=C_m\otimes {\begin{bmatrix} \mathbf{1}_p^t\end{bmatrix}}$. $\mathcal{R}$ is a $\frac{p^{m+1}-1}{p-1}\times p\big(\frac{p^{m+1}-1}{p-1}\big)$ $(0,\pm 1)$-matrix. The weight of each row is $w=p(p^m)=p^{m+1}$ and the minimum distance is $d=p^m(\frac{p+3}{2})$. 
\item The matrices 
$$
W=\begin{bmatrix} \mathbf{0}_{\frac{p^{m+1}-1}{p-1}} & \mathcal{R} \\ \mathbf{1}_{p^{m+1}} & \mathcal{D}  \end{bmatrix},
$$
and $-W$ provides all the codewords. \\
There remains to show that the distance between one codeword from $W $ and one from $-W$ is not smaller than $p^{m-1}(\frac{p+3}{2})$. This is easy to see. The distance of a codeword $k$ in $W$ from a codeword $-j$ in $-W$ is either $p^{m+1}$ or $p^m(\frac{p+3}{2})$ depending on whether $k=j$ or 
$k\ne j$, respectively. $\frac{p+3}{2}\le p$ for all $p\ge 3$. This completes the proof. \qedhere
\end{enumerate}
\end{proof}

\begin{remark} The ternary codes from Theorem \ref{main} corresponding to $p=3$ are in addition \emph{equidistant}. So, there is an optimal set of equidistant constant weight ternary codes consisting of $n=3^{m+1}-1$ codewords of length $n=\frac{3^{m+1}-1}{2}$, weight $w=3^m$ and minimum distance $d=3^m$ for each positive integer $m$. All other ternary codes obtained from Theorem \ref{main} are  2-distance codes.
 \end{remark}
 
 \section{Ternary codes from balanced weighing matrices}
 A weighing matrix $W(v,k)=[w_{ij}]$ is said to be \emph{balanced} if the matrix of absolute values $[|w_{ij}|]$ is the incidence matrix of a symmetric balanced incomplete design with parameters $(v,k,\lambda)$, \cite{ionin} for details. To emphasize that a weighing matrix is balanced it is denoted by $BW(v,k,\lambda)$, where $\lambda=\frac{k(k-1)}{v-1}$.
 Examples of balanced weighing matrices include conference matrices, those with classical parameters $W\big(\frac{p^{m+1}-1}{p-1},p^m\big)$ and a few others. The balanced structure of weighing matrices used in previous sections are instrumental with the generation of optimal codes. A natural question is if all balanced weighing matrices lead to optimal codes. The answer in general depends on the parameters $(v,k,\lambda)$ and it seems to be difficult.
\begin{lemma}\label{d}
Let $C_v$ be the set of ternary codes consisting of the rows of a $BW(v,k,\lambda)=[w_{ij}]$. Then the distance between the codewords is the constant $$d_3=\frac{4k(v-1)-3k(k-1)}{2(v-1)},$$ and the distance between the binary codewords consisting of the rows of
$|W|=[|w_{ij}|]$ is the constant $$d_2=2\left(k-\frac{k(k-1)}{v-1}\right).$$
\end{lemma}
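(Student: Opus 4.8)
The plan is to exploit the defining property of a balanced weighing matrix: if $W=[w_{ij}]$ is a $BW(v,k,\lambda)$, then $|W|=[|w_{ij}|]$ is the incidence matrix of a symmetric $2$-$(v,k,\lambda)$ design, so any two distinct rows of $|W|$ have exactly $\lambda=\frac{k(k-1)}{v-1}$ common positions carrying a $1$. For the binary distance $d_2$ this is immediate: two rows of $|W|$ each have $k$ ones, they agree on $\lambda$ of those positions, so each row has $k-\lambda$ ones where the other has a zero, and the Hamming distance is $2(k-\lambda)=2\left(k-\frac{k(k-1)}{v-1}\right)$, which is exactly $d_2$. This half needs no appeal to the $\pm 1$ structure at all.

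For $d_3$ I would first fix two distinct rows $u,w$ of $W$. Outside their common support (size $2(k-\lambda)$ as above) and outside the positions where both are zero, the rows automatically differ; so the only positions needing scrutiny are the $\lambda$ common-support columns, where both entries are $\pm 1$. Here the orthogonality relation $WW^t=kI_v$ forces the inner product $\sum_j u_j w_j$ over these $\lambda$ columns to be $0$; since each product $u_jw_j=\pm 1$ and there are $\lambda$ of them, $\lambda$ must be even, with exactly $\lambda/2$ agreements ($u_jw_j=+1$) and $\lambda/2$ disagreements. Thus on the common support the rows differ in $\lambda/2$ places, and the total ternary distance is $2(k-\lambda)+\lambda/2=2k-\tfrac{3}{2}\lambda = \frac{4k(v-1)-3k(k-1)}{2(v-1)}$, which is $d_3$. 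The constancy over all pairs of rows follows because every quantity used ($k$, $\lambda$, and the inner-product value $0$) is independent of the chosen pair.

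The only genuinely delicate point is the claim that the $\lambda$ common-support entries split evenly into $\lambda/2$ agreements and $\lambda/2$ disagreements, i.e. that the restricted inner product is exactly $0$. This is where balancedness is essential: the full inner product $\sum_{j=1}^v u_jw_j$ vanishes by $WW^t=kI_v$, and on the non-common-support columns at least one of $u_j,w_j$ is zero, so those terms contribute nothing; hence the sum over the $\lambda$ common columns is $0$, forcing $\lambda$ even and the even split. (As a sanity check, $v-1\mid k(k-1)$ and the parity of $\lambda$ are consistent with known feasibility conditions on balanced weighing matrices.) No other step is more than bookkeeping, so I expect this inner-product argument to be the crux of the proof.
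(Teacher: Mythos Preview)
Your argument is correct and matches the paper's proof essentially line for line: both compute $d_2=2(k-\lambda)$ from the symmetric-design property of $|W|$, then use orthogonality of distinct rows of $W$ to split the $\lambda$ common-support columns into $\lambda/2$ agreements and $\lambda/2$ disagreements, giving $d_3=2(k-\lambda)+\lambda/2$. Your write-up is in fact slightly more explicit than the paper's in justifying why the inner product restricted to the common support vanishes.
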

\begin{proof}
The matrix $|W|=[|w_{ij}|]$ is the incidence matrix of a symmetric $(v,k,\lambda)$ design, where $\lambda=\frac{k(k-1)}{v-1}$.  Any two distinct binary codewords  have $\lambda$ ones in the same columns  and the remaining $k-\lambda$ ones have zero in the same columns. This shows that  $d_2=2\left(k-\frac{k(k-1)}{v-1}\right)$. The same arrangement happens for the codewords in $C_v$. Since the two rows are orthogonal, there are $\lambda/2$ (note that this forces $\lambda$ to be even) $-1$'s in the same column contributing the same number to the distance in addition to the $d_2=2\big(k-\frac{k(k-1)}{v-1}\big)$. Therefore, 
\begin{align*}d_3=2\left(k-\frac{k(k-1)}{v-1}\right)+\frac{k(k-1)}{2(v-1)}=\frac{4k(v-1)-3k(k-1)}{2(v-1)}.\end{align*}
This completes the proof. 
\end{proof}

Theorem~\ref{con-code} is extended to the following. 
 The rows of the derived part of a balanced weighing matrix form an optimal constant weight ternary code.
 \begin{theorem}\label{derived}
 Let $\big(v,k,\frac{k(k-1)}{v-1}\big)$ be the parameters of a balanced weighing matrix, and $d=\frac{4k(v-1)-3k(k-1)}{2(v-1)}$. Then
  $$A_3(v-1,d,k-1)=k.$$
 \end{theorem}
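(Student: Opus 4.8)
The plan is to follow the proof of Theorem~\ref{con-code} almost verbatim, with an arbitrary balanced weighing matrix in place of a conference matrix and Lemma~\ref{d} supplying the distance count. First I would put the given $BW\bigl(v,k,\tfrac{k(k-1)}{v-1}\bigr)$ into normal form $\begin{bmatrix}\mathbf{0}_{v-k} & R\\ \mathbf{1}_k & D\end{bmatrix}$; this is legitimate because every weighing matrix has a normal form, and permuting and negating rows and columns leaves the matrix of absolute values the incidence matrix of a symmetric $(v,k,\lambda)$ design. The derived part $D$ is then a $k\times(v-1)$ matrix each of whose $k$ rows, read as a ternary word, has weight exactly $k-1$: every row of $W$ has precisely $k$ nonzero entries, and for the last $k$ rows one of these lies in the deleted first column.

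Next I would prove the lower bound $A_3(v-1,d,k-1)\ge k$. The rows of $D$ are the truncations of the $k$ rows of $W$ that carry a $1$ in the first coordinate; any two of these rows agree in that coordinate, so deleting it does not change their Hamming distance. By Lemma~\ref{d} the distance between any two rows of the balanced weighing matrix $W$ is the constant $d_3=\frac{4k(v-1)-3k(k-1)}{2(v-1)}$, which is precisely the $d$ in the statement. Hence the $k$ rows of $D$ form a constant weight ternary code of length $n=v-1$, weight $w=k-1$ and minimum distance $d$, so $A_3(v-1,d,k-1)\ge k$.

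For the matching upper bound I would invoke the restricted Johnson bound of Theorem~\ref{J1} with $n=v-1$, $w=k-1$ and $d=\frac{4k(v-1)-3k(k-1)}{2(v-1)}$. A direct computation gives $2nd=4k(v-1)-3k(k-1)=k\,(4v-3k-1)$ and $3w^2-4nw+2nd=4(v-1)-3(k-1)=4v-3k-1$; since a balanced weighing matrix has $k\le v-1$, the quantity $4v-3k-1\ge v+2$ is positive, so Theorem~\ref{J1} applies and yields $A_3(v-1,d,k-1)\le\bigl\lfloor k(4v-3k-1)/(4v-3k-1)\bigr\rfloor=k$. Combining this with the lower bound gives the claimed equality $A_3(v-1,d,k-1)=k$.

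The argument is essentially a bookkeeping exercise, and the calculation above is the only thing to carry out in detail; the single subtlety worth flagging is the step identifying the minimum distance of the code with $d_3$ — namely, that truncating the common leading $1$ from the last $k$ rows of the normalized matrix preserves pairwise distances, so that Lemma~\ref{d} is directly applicable. The integrality of $d$, equivalently the evenness of $\lambda$, is already part of Lemma~\ref{d} and needs no separate comment.
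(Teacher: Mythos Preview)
Your proposal is correct and follows essentially the same approach as the paper: use the derived part of a normalized $BW(v,k,\lambda)$ to get the lower bound, then apply the restricted Johnson bound of Theorem~\ref{J1} with the same computation $3w^2-4nw+2nd=4(v-1)-3(k-1)=4v-3k-1$ for the upper bound. Your write-up is in fact more careful than the paper's on two points the paper leaves implicit: that normalizing preserves the balanced property so Lemma~\ref{d} still applies, and that deleting the common leading $1$ does not change pairwise distances; you also justify the positivity $4v-3k-1>0$ via $k\le v-1$, which the paper simply asserts.
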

 \begin{proof}
 Let $W$ be a balanced weighing matrix $BW(v,k,\lambda) $. 
The rows of the  derived part of $W$ form an optimal constant weight ternary code of length $n=v-1$, constant weight $w=k-1$ and the constant distance $d$. 

 The condition $3w^2-4nw+2nd=4(v-1) - 3(k-1)>0$ by the assumption for the given parameters. By inequality (\ref{eq:1}) of Theorem \ref{J1}
 $$A_3(v-1,d,k-1)\le \frac{4k(v-1)-3k(k-1)}{4(v-1)-3(k-1)}=k.$$
 This completes the proof.
 \end{proof}
A infinite family $BW(1+18\cdot \frac{9^{m+1}-1}{8},9^{m+1},4\cdot9^m)$ including $BW(19,9,4)$ was constructed in \cite{KPS}. As a corollary, we obtain: 
\begin{corollary}\label{cor2}
$A_3(18\cdot \frac{9^{m+1}-1}{8},12\cdot 9^{m},9^{m+1}-1)=9^{m+1}$ for every non-negative integer $m$.
 \end{corollary}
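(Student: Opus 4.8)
The plan is to combine the infinite family of balanced weighing matrices $BW\bigl(1+18\cdot\frac{9^{m+1}-1}{8},9^{m+1},4\cdot 9^m\bigr)$ from \cite{KPS} with Theorem~\ref{derived}. First I would record the parameters: set $v=1+18\cdot\frac{9^{m+1}-1}{8}$, $k=9^{m+1}$, and $\lambda=4\cdot 9^m$, and verify the identity $\lambda=\frac{k(k-1)}{v-1}$, which is automatic since a balanced weighing matrix with these parameters exists (so $v-1=18\cdot\frac{9^{m+1}-1}{8}$ and $k-1=9^{m+1}-1$ divide correctly). Then I would simply substitute into the formula of Theorem~\ref{derived}: the code length is $n=v-1=18\cdot\frac{9^{m+1}-1}{8}$, the weight is $w=k-1=9^{m+1}-1$, and $A_3(v-1,d,k-1)=k=9^{m+1}$.

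Next I would compute the minimum distance $d=\frac{4k(v-1)-3k(k-1)}{2(v-1)}$ explicitly for these parameters. Substituting $v-1=18\cdot\frac{9^{m+1}-1}{8}=\frac{9(9^{m+1}-1)}{4}$ and $k-1=9^{m+1}-1$, the numerator becomes $k\bigl(4(v-1)-3(k-1)\bigr)=9^{m+1}\bigl(9(9^{m+1}-1)-3(9^{m+1}-1)\bigr)=9^{m+1}\cdot 6(9^{m+1}-1)$, and dividing by $2(v-1)=\frac{9(9^{m+1}-1)}{2}$ gives $d=\frac{9^{m+1}\cdot 6(9^{m+1}-1)\cdot 2}{9(9^{m+1}-1)}=\frac{12\cdot 9^{m+1}}{9}=12\cdot 9^m$. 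This matches the claimed distance. I would also check $3w^2-4nw+2nd>0$, which by the displayed computation in the proof of Theorem~\ref{derived} equals $4(v-1)-3(k-1)=9(9^{m+1}-1)-3(9^{m+1}-1)=6(9^{m+1}-1)>0$ for all $m\ge 0$, so the Johnson bound (\ref{eq:1}) is applicable and the upper bound $A_3(v-1,d,k-1)\le k$ holds; the derived part of the balanced weighing matrix meets it.

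Since every step here is a direct substitution into results already proved, there is no real obstacle; the only thing that needs care is the arithmetic verifying that the general distance formula of Lemma~\ref{d}/Theorem~\ref{derived} collapses to the clean value $12\cdot 9^m$ and that $\lambda=4\cdot 9^m$ is indeed even (so that the counting argument of Lemma~\ref{d} producing $\lambda/2$ negative ones in common columns goes through). Both hold. I would conclude by invoking the existence of the family from \cite{KPS} together with Theorem~\ref{derived} to get $A_3\bigl(18\cdot\frac{9^{m+1}-1}{8},12\cdot 9^{m},9^{m+1}-1\bigr)=9^{m+1}$ for every non-negative integer $m$, noting that $m=0$ recovers the case of $BW(19,9,4)$ giving $A_3(18,12,8)=9$.

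\begin{proof}
The balanced weighing matrices $BW\bigl(1+18\cdot\frac{9^{m+1}-1}{8},9^{m+1},4\cdot 9^m\bigr)$ constructed in \cite{KPS} have parameters $v-1=18\cdot\frac{9^{m+1}-1}{8}=\frac{9(9^{m+1}-1)}{4}$, $k=9^{m+1}$, $k-1=9^{m+1}-1$, and $\lambda=\frac{k(k-1)}{v-1}=4\cdot 9^m$. By Theorem~\ref{derived}, the rows of the derived part form an optimal constant weight ternary code of length $n=v-1=18\cdot\frac{9^{m+1}-1}{8}$, weight $w=k-1=9^{m+1}-1$, and distance
$$d=\frac{4k(v-1)-3k(k-1)}{2(v-1)}=\frac{k\bigl(4(v-1)-3(k-1)\bigr)}{2(v-1)}=\frac{9^{m+1}\bigl(9(9^{m+1}-1)-3(9^{m+1}-1)\bigr)}{\tfrac{9(9^{m+1}-1)}{2}}=12\cdot 9^{m}.$$
Moreover $3w^2-4nw+2nd=4(v-1)-3(k-1)=6(9^{m+1}-1)>0$, so the Johnson bound (\ref{eq:1}) applies and yields $A_3(v-1,d,k-1)\le k=9^{m+1}$, with equality realized by these $k$ codewords. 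Hence $A_3\bigl(18\cdot\frac{9^{m+1}-1}{8},12\cdot 9^{m},9^{m+1}-1\bigr)=9^{m+1}$ for every non-negative integer $m$.
\end{proof}
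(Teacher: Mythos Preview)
Your proof is correct and follows exactly the approach implicit in the paper: the corollary is obtained by substituting the parameters of the family $BW\bigl(1+18\cdot\frac{9^{m+1}-1}{8},\,9^{m+1},\,4\cdot 9^m\bigr)$ from \cite{KPS} into Theorem~\ref{derived}, and your arithmetic verifying $d=12\cdot 9^m$ and $4(v-1)-3(k-1)=6(9^{m+1}-1)>0$ is accurate.
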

 
 \begin{theorem}\label{last}
 Let $\big(v,k,\frac{k(k-1)}{v-1}\big)$ be the parameters of a balanced weighing matrix for which $v>\frac{3k-1}{2}$. Then
  $$v\le A_3(v,d_3=\frac{4k(v-1)-3k(k-1)}{2(v-1)},k)<2v.$$ 
 \end{theorem}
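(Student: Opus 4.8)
The plan is to prove the two inequalities separately, using the restricted Johnson bound of Theorem~\ref{J1} for the upper bound and an explicit construction for the lower bound. For the upper bound, the idea is to compute the Johnson quantity $3w^2 - 4nw + 2nd$ with $n = v$, $w = k$, and $d = d_3 = \frac{4k(v-1) - 3k(k-1)}{2(v-1)}$; a short calculation should give $3k^2 - 4vk + 2v d_3 = 3k^2 - 4vk + \frac{v(4k(v-1) - 3k(k-1))}{v-1}$. I would simplify this to a clean expression in $v$ and $k$; one expects it to come out to something like $\frac{k(something positive under the hypothesis v > (3k-1)/2)}{v-1}$, and since $v > \frac{3k-1}{2}$ guarantees positivity, Theorem~\ref{J1} applies and yields $A_3(v,d_3,k) \le \lfloor 2vd_3 / (3k^2 - 4vk + 2vd_3)\rfloor$. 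The target is to show this floor is strictly less than $2v$, which amounts to checking $2vd_3 < 2v(3k^2 - 4vk + 2vd_3)$, i.e. that the denominator strictly exceeds $d_3$; again the hypothesis $v > \frac{3k-1}{2}$ should be exactly what makes this strict. Care is needed to track whether the bound is $< 2v$ or only $\le 2v$, and the floor may force a slightly sharper inequality — this is the one place where the precise arithmetic matters.

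For the lower bound $v \le A_3(v,d_3,k)$, the natural construction is simply the rows of the balanced weighing matrix $BW(v,k,\lambda)$ itself: by Lemma~\ref{d} any two distinct rows are at Hamming distance exactly $d_3$, every row has weight $k$, and there are $v$ rows, so these $v$ codewords form a constant weight ternary code of length $v$, weight $k$, and minimum distance $d_3$ (in fact equidistant). This immediately gives $A_3(v,d_3,k) \ge v$. One should note that Lemma~\ref{d} implicitly requires $\lambda$ even, which holds for any balanced weighing matrix since orthogonality of two rows forces the number of sign agreements to equal the number of sign disagreements among the $\lambda$ common support positions; I would mention this so the invocation of Lemma~\ref{d} is clean.

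The main obstacle I anticipate is purely the bookkeeping in the upper-bound computation: showing both that $3k^2 - 4vk + 2vd_3 > 0$ and that the resulting floor is strictly below $2v$, with the single hypothesis $v > \frac{3k-1}{2}$ doing all the work. I would carry it out by first clearing the denominator $v-1$, writing everything as a single rational function of $v$ and $k$, and then factoring; I expect the numerator of $3k^2 - 4vk + 2vd_3$ to simplify to a constant multiple of $k\bigl(4(v-1) - 3(k-1)\bigr) = k(4v - 3k - 1)$, which is positive precisely when $v > \frac{3k-1}{2}$, mirroring the computation already done in the proof of Theorem~\ref{derived}. Once that factorization is in hand, the comparison $2vd_3$ versus the denominator reduces to a linear inequality in $v$ that the hypothesis settles, and the strictness of $< 2v$ follows because the floor of a quantity strictly less than $2v$ (an integer) is at most $2v - 1 < 2v$. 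I would close by remarking that equality $A_3(v,d_3,k) = v$ need not hold in general — the theorem only pins the value to the interval $[v, 2v)$ — which is consistent with the paper's earlier comment that the general balanced case is subtle.
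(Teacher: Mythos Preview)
Your approach is the same as the paper's: the lower bound comes from the $v$ rows of the balanced weighing matrix (via Lemma~\ref{d}), and the upper bound from the restricted Johnson bound of Theorem~\ref{J1}. One arithmetic correction: your guessed factorization of $3k^2-4vk+2vd_3$ is off --- the actual simplification is
\[
3k^2-4vk+2vd_3=\frac{3k(v-k)}{v-1},
\]
which is positive simply because $v>k$, not because $v>\tfrac{3k-1}{2}$. The hypothesis $v>\tfrac{3k-1}{2}$ enters only at the last step: the Johnson bound evaluates to $\dfrac{v(4v-3k-1)}{3(v-k)}$, and this is strictly less than $2v$ exactly when $4v-3k-1<6(v-k)$, i.e.\ $v>\tfrac{3k-1}{2}$. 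With that bookkeeping fixed, your plan goes through and matches the paper's proof.
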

 \begin{proof}
 Working out the condition in the restricted Johnson bound Theorem \ref{J1} $$3k^2-4vk+2vd_3=3k\bigg(k-\frac{v(k-1)}{v-1}\bigg)$$ which is positive. The upper bound from Theorem \ref{J1} is $\frac{v(4v-3k-1)}{3(v-k)}$.
 It follows that $$v\le A_3(v,d_3,k)<2v$$
 if and only if $v>\frac{3k-1}{2}$.
 \end{proof} 
 \begin{remark}
There is a $BW(19,9,4)$, see \cite{KPS}.  The rows of the derived part of this weighing matrix form an optimal constant weight $8$ and equidistant $d=12$ ternary codes of length $18$  by Theorem \ref{derived}. The condition in Theorem \ref{last} holds and $$19\le A_3(19,12,9)<38.$$
The Johnson upper bound in Theorem \ref{J1} provides a smaller number of $30$ for the possible number of codewords. 
 \end{remark}
 \section*{Acknowledgments.}
The authors acknowledge many help and guidance from Professor Vladimir Tonchev.
Hadi Kharaghani is supported by the Natural Sciences and
Engineering  Research Council of Canada (NSERC).  Sho Suda is supported by JSPS KAKENHI Grant Number 18K03395.


\begin{thebibliography}{99}

\bibitem{Hed}
A. S. Hedayat, N. J. A. Sloane, John Stufken, Orthogonal arrays, Theory and applications. With a foreword by C. R. Rao, Springer Series in Statistics, Springer-Verlag, New York, 1999.

\bibitem{pless}
 W. Cary Huffman, Vera Pless,  Fundamentals of error-correcting codes. Cambridge University Press, Cambridge, 2003.
 
 \bibitem{ionin}
 Yury J. Ionin, Mohan S. Shrikhande,  Combinatorics of symmetric designs. New Mathematical Monographs, 5. Cambridge University Press, Cambridge, 2006.

\bibitem{KPS}
 Hadi Kharaghani, Thomas Pender, Sho Suda, 
A family of balanced generalized weighing matrices, to appear in Combinatorica. 

 \bibitem{oster}
 Patric R. J. $\ddot{O}$sterg$\dot{a}$rd, Mattias Svanstr$\ddot{o}$m, Ternary constant weight codes. Electron. J. Combin. 9 (2002), no. 1, Research Paper 41, 23 pp.

\bibitem{see-yam} 
 Jennifer Seberry, Mieko Yamada,  Hadamard matrices, sequences, and block designs, Contemporary design theory, 431--560, Wiley, New York, 1992.

\end{thebibliography}
\end{document}